\newtheorem{theorem}{\bf Theorem}[section]
\newtheorem{corollary}[theorem]{\bf Corollary}
\newtheorem{lemma}[theorem]{\bf Lemma}
\newtheorem{proposition}[theorem]{\bf Proposition}
\newtheorem{remark}[theorem]{\bf Remark}
\newtheorem{definition}[theorem]{\bf Definition}
\newtheorem{example}[theorem]{\bf Example}
\newcommand{\qed}{\hfill $\square$ \bigskip}
\begin{document}

\baselineskip=0.30in
\vspace*{40mm}

\begin{center}
{\LARGE \bf Resonantly Equivalent Catacondensed Even Ring Systems}
\bigskip \bigskip

{\large \bf Simon Brezovnik$^a$, \qquad
Niko Tratnik$^{a}$,\\ \qquad Petra \v Zigert Pleter\v sek$^{a,b}$
}
\bigskip\bigskip

\baselineskip=0.20in

\smallskip

$^a$ {\it University of Maribor, Faculty of Natural Sciences and Mathematics, Slovenia} \\
\smallskip
{\tt simon.brezovnik2@um.si, niko.tratnik@um.si, petra.zigert@um.si}
\medskip


$^b$ {\it University of Maribor, Faculty of Chemistry and Chemical Engineering, Slovenia}\\
\medskip

\bigskip\medskip

(Received \today)

\end{center}

\noindent
\begin{center} {\bf Abstract} \end{center}
In this paper we generalize the binary coding procedure of perfect matchings from catacondensed benzenoid graphs to catacondensed even ring systems (also called cers). Next, we study cers with isomorphic resonance graphs. For this purpose, we define resonantly equivalent cers. Finally, we investigate cers whose resonance graphs are isomorphic to the resonance graphs of catacondensed benzenoid graphs. As a consequence we show that for each phenylene there exists a catacondensed benzenoid graph such that their resonance graphs are isomorphic. 
\vspace{3mm}\noindent

\baselineskip=0.30in



\section{Introduction}

In the present paper we  focus  on   a  class of graphs called
even ring systems. An \textit{even ring system} is a simple bipartite 2-connected plane graph with all interior vertices of degree 3 and all boundary
vertices of degree 2 or 3 \cite{kl-br}.  If all inner faces of an even ring system  are  hexagons we obtain a well known class of molecular graphs named
  \textit{benzenoid graphs} or \textit{benzenoid systems} (see \cite{gucy-89}). They represent aromatic  hydrocarbon molecules composed of
 benzene rings. However, in the existing (both mathematical and chemical) literature, there is inconsistency in the terminology pertaining to benzenoid graphs, since in some literature it is assumed that a benzenoid graph can be embedded into the regular hexagonal lattice.

The \textit{inner dual} of a given even ring system consists of vertices corresponding
to inner faces of the system; two vertices are adjacent if and only if the corresponding
faces have a common edge. A {\em catacondensed even ring system} 
(short {\em cers}) is an even ring system so that the
inner dual is a tree $T$. An inner face of a cers is called {\em terminal} if it corresponds to a vertex of degree one in $T$.
 The  class of  cers contains chemically important structures
called  catacondensed benzenoid graps as well  another interesting molecules named phenylenes:
in particular, a {\it catacondensed benzenoid graph} $B$  is a cers where all inner faces are hexagons and
 if we add squares between all pairs of adjacent hexagons of $B$, the obtained graph is called a \textit{phenylene}. 
Threre are also some other not so well known molecules that belong to  even ring systems: for example, in \cite{cy} 
$\alpha$-$4$-catafusenes were considered and in  \cite{ra} cyclooctatetraenes. 
\smallskip

 The aromaticity of  benzenoid hydrocarbon is the consequence of the delocalization of $\pi$-electrons, i.e.\,double bounds in the molecular orbitals and it is usually  represented with the Kekul\' e structures. The interaction between 
  Kekul\' e structures is reflected in the resonance graph of a given molecular graph.  
Resonance graphs were  independently introduced by   chemists 
 (El-Basil \cite{el-basil-93/1,el-basil-93/2},  Gr{\"u}ndler \cite{grundler-82}),  as well as by 
 mathematicians, since a  Kekul\' e structure is just the perfect matching of a given  graph (Zhang, Guo, and Chen \cite{zhgu-88} used a term
 $Z$-transformation graph).

It was shown in \cite{zhgu-88}  that  the resonance graphs of benzenoid graphs are always connected, bipartite, and they are either a path or have 
girth 4. Later, some of  these properties were  obtained  for all plane (elementary) bipartite graphs (for example, see \cite{zhang-zhang}).  
Nowadays the structure of resonance graphs  is well investigated for different families of graphs.
Recently, some properties of resonance graphs or closely related concepts  were established for benzenoid graphs \cite{zigert1,zigert2}, fullerenes \cite{doslic,ra-vu,tr-zi-3,zhao}, nanotubes \cite{tr-zi,tr-zi-2}, and plane bipartite graphs \cite{che0}.

For our purposes the most important result on resonance graphs was obtained in 
 \cite{kl-br},  where the authors  proved  that the resonance 
graph of   a cers  belongs to the class of median graphs. 
This result  led  to an algorithm that assigns a unique 
and quite short binary code to every perfect matching  of a catacondensed benzenoid graph \cite{kl}. Later, the binary coding of perfect matchings was generalized to any benzenoid graph \cite{zh-la-shiu}.
 For the computer--aided manipulation with  the Kekul\'e structures 
a short representation of such a structure is very welcome.
The possibility of storing a complete information on perfect matching
 by means of a short binary string is another 
asset that may become indispensable when dealing with graphs possessing many such matchings.
 
In the next section we present some basic notation and definitions. In section 3,  the   binary coding  procedure of  perfect matchings is generalized to all cers. Moreover, cers with isomorphic resonance graphs are studied in section 4. Finally, in section 5 the cers whose resonance graphs are isomorphic to the resonance graphs of a catacondensed benzenoid graph are investigated.

\section{Preliminaries}

\noindent
The {\em distance} $d_G(u,v)$ 
between vertices $u$ and $v$ of a graph $G$ 
is defined as the usual shortest path distance. The distance between two edges $e$ and $f$ of $G$, denoted by $d_G(e,f)$ or shortly by $d(e,f)$, is defined as the distance between corresponding vertices in the line graph of $G$. Here we follow this convention because in this way the pair $(E(G),d)$ forms a metric space. On the other hand, for edges $e = ab$ and $f = xy$ of a graph $G$ it is also legitimate to set $\widehat{d}_G(e,f) = \min \lbrace d_G(a,x), d_G(a,y), d_G(b,x), d_G(b,y) \rbrace\,$. Obviously, for $e \neq f$ it holds $d_G(e,f)=\widehat{d}_G(e,f) + 1$.
\smallskip

\noindent
The {\em hypercube} $Q_n$ of dimension $n$ is defined in the following way: 
all vertices of $Q_n$ are presented as $n$-tuples $x_1x_2\ldots x_n$ where $x_i \in \{0,1\}$ for each $i \in \lbrace 1,\ldots,n \rbrace$, 
and two vertices of $Q_n$ are adjacent if the corresponding $n$-tuples differ in precisely one position. A subgraph $H$ of a graph $G$ is an  {\em isometric subgraph}
if for all $u,v\in V(H)$ it holds $d_H(u,v)=d_G(u,v)$. If a graph is isomorphic to an isometric subgraph of $G$, we say that it can be {\em isometrically embedded} in $G$. Any isometric subgraph of a hypercube is called a {\em partial cube}.
Moreover, the notation $G[X]$ is used to denote
the subgraph of $G$ induced by the set $X$.
\smallskip

\noindent
The  {\em interval} $I(u,v)$ between vertices $u$ and $v$ consists of
all vertices on a shortest path between $u$ and $v$. A  {\em median} of a triple of vertices $u$, $v$, and $w$  is a vertex
that lies in $I(u,v)\cap I(u,w)\cap I(v,w)$.
A connected graph is a {\em median graph} if every triple of its 
vertices has a unique median. Basic results about median graphs 
can be found in \cite{imkl-00}. It is well-known that every median graph is a partial cube.
\smallskip

\noindent
A {\em 1-factor} of a graph $G$ is a
spanning subgraph of $G$ such that every vertex has degree one. The edge set of a 1-factor is called a {\em perfect matching} of $G$, which is a set of independent edges covering all vertices of $G$. In chemical literature, perfect matchings are known as Kekul\'e structures (see \cite{gucy-89} for more details). 
\smallskip

\noindent
Let $G$ be a cers. We denote the edges lying on some face $F$ of $G$ by $E(F)$. The {\em resonance graph} $R(G)$ is the graph whose vertices are the  perfect matchings of $G$, and two perfect matchings $M_1,M_2$ are adjacent whenever their symmetric difference forms the edge set of exactly one inner face $F$ of $G$, i.e.\,$M_1 \oplus M_2 = E(F)$. In such a case we say that  $M_1$ can be obtained from $M_2$ by \textit{rotating the edges} of $F$.
\smallskip

\noindent
Finally, the Cartesian product $G\Box H$ of graphs $G$ and $H$ has 
the vertex set $V(G)\times V(H)$, and vertices $(a,x)$ and $(b,y)$ are 
adjacent in $G\Box H$ whenever $ab\in E(G)$ and $x=y$, 
or $a=b$ and $xy\in E(H)$.


\section{Binary coding procedure of perfect matchings of cers}

In this section we generalize the binary coding of perfect matchings from catacondensed benzenoid graphs to cers. For this purpose, some results from \cite{kl-br} are briefly repeated in the first part of the section.

Let $G$ be a cers. If $F,F'$ are adjacent faces of $G$, then the two edges on the boundary of $F$ that
have exactly one vertex on the boundary of $F'$ are called the {\em link} of $F$ to $F'$.
In \cite{kl-br} the authors proved the following proposition. 

\begin{proposition} \cite{kl-br}
Every cers has a perfect matching and given a perfect matching $M$
for every link either both edges or none belong to $M$.
\end{proposition}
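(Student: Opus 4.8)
The statement has two parts, and I would treat them separately, spending most of the effort on the link property. For \emph{existence}, I would argue by induction on the number of inner faces. A cers with a single face is an even cycle, which trivially has a perfect matching. For the inductive step I would pick a terminal face $F_0$, sharing its unique edge $uv$ with a neighbour, and delete the internal (degree-$2$) vertices $w_1,\dots,w_{2m-2}$ of the path that $\partial F_0$ traces from $u$ to $v$ avoiding $uv$. The resulting graph is a cers with one fewer face (its inner dual is $T$ with a leaf removed, and the edge $uv$ survives), so by induction it has a perfect matching $M'$; extending $M'$ by the pairing $w_1w_2,w_3w_4,\dots,w_{2m-3}w_{2m-2}$, which is legitimate because $\partial F_0$ is an even cycle, yields a perfect matching of $G$.

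For the \emph{link property}, fix a perfect matching $M$ and adjacent faces $F,F'$ sharing the edge $uv$; write the three edges at $u$ as $uv$, $uu'$ (the link edge lying on $\partial F$) and $uu''$ (lying on $\partial F'$), and symmetrically $vv'$, $vv''$ at $v$. My plan is to first prove the ``dual'' statement that $uu''\in M \iff vv''\in M$, and then transfer it to the link $\{uu',vv'\}$ using only the degree-$3$ condition at $u$ and $v$: if $uu''\in M$ then neither $uv$ nor $uu'$ lies in $M$, and likewise at $v$, so both link edges are out; while if $uu'',vv''\notin M$ then the matched edge at $u$ (resp.\ $v$) is $uv$ or $uu'$ (resp.\ $vv'$), and a short case split on whether $uv\in M$ shows the two link edges are simultaneously in or out.

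The heart of the argument, and the step I expect to be the main obstacle, is establishing that $\{u,v\}$ is a $2$-vertex cut separating the two ``sides'' of the shared edge. Here the catacondensed hypothesis is essential: deleting the dual-tree edge $FF'$ splits $T$ into the subtree $T_{F'}$ containing $F'$ and its complement, and since $T$ is a tree this is the \emph{only} dual edge joining the two parts, i.e.\ the faces of $T_{F'}$ and the remaining faces share no edge other than $uv$. By planarity this forces every path from a vertex used only by $T_{F'}$ to a vertex on the other side to pass through $u$ or $v$; hence, letting $A$ be the set of vertices lying on faces of $T_{F'}$ but distinct from $u$ and $v$, the only edges of $G$ joining $A$ to $\{u,v\}$ are exactly $uu''$ and $vv''$.

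Finally I would run the parity count on $A$. Since every inner face is an even cycle and the faces of $T_{F'}$ are glued along a tree, with each non-root face meeting its parent in exactly two vertices, an inclusion--exclusion shows that the total number of vertices on the faces of $T_{F'}$ is even; removing $u$ and $v$ leaves $|A|$ even. In the perfect matching $M$ every vertex of $A$ is covered, and the only edges allowing a vertex of $A$ to be matched outside $A$ are $uu''$ and $vv''$; hence $|A|-|M\cap\{uu'',vv''\}|$ must be even, forcing $|M\cap\{uu'',vv''\}|\in\{0,2\}$. This is exactly $uu''\in M \iff vv''\in M$, and combined with the transfer step of the previous paragraph it proves that for every link either both edges or none belong to $M$.
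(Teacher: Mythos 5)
Your proposal is correct, but there is nothing in the paper to compare it against: this proposition is imported verbatim from \cite{kl-br} and stated without proof, so any assessment must be of your argument on its own merits. Judged that way, it is sound and essentially self-contained. The existence part is the natural terminal-face induction (the same decomposition --- attaching an inner face over an edge of a smaller cers --- that the paper later recalls from \cite{kl-br} when describing the hypercube embedding). Your link-property argument is a clean, non-inductive route: identify $\{uu'',vv''\}$ as the \emph{entire} edge boundary of the vertex set $A$ lying strictly on the $F'$-side of the dual-tree cut, show $|A|$ is even, and conclude $|M\cap\{uu'',vv''\}|\in\{0,2\}$ by parity; the degree-3 case split then transfers this to the link $\{uu',vv'\}$ of $F$ to $F'$, and symmetry covers all links.

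Two facts you use implicitly should be made explicit, because they are precisely where the hypotheses ``catacondensed'' and ``all degrees at most $3$'' enter, and they carry more weight than the phrase ``by planarity'' suggests. First, adjacent inner faces of a cers share \emph{exactly one} edge (otherwise ``the'' shared edge $uv$, and hence the link, is not well defined): a second shared edge would force either an interior vertex of degree $2$ or a cycle in the inner dual. Second, both your separation claim and your inclusion--exclusion need that two distinct inner faces meeting at a vertex must share an edge incident to that vertex. This holds because every vertex has degree at most $3$, so any two faces incident to it occupy corners that share an edge; it is \emph{not} a consequence of planarity alone. Without it, a vertex not in $\{u,v\}$ could lie on faces from both sides of the cut (so edges leaving $A$ need not end in $\{u,v\}$), and a child face could meet the previously assembled union in vertices other than the two endpoints of its parent edge (breaking the evenness count for $|A|$). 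With these two lemmas spelled out, your parity computation and the transfer step are complete and correct.
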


In that same paper 
the structure of the resonance graph of  $G$ was established and it was proved that the resonance graph can be isometrically embedded into the hypercube $Q_n$, where $n$ is the number of inner faces of $G$. 
Let $G$ be a cers that is obtained from a 
cers $G'$ by attaching an inner face $F$ over 
an edge $e$\, that belongs also to the inner face $F'$. Suppose that $G'$ contains $n-1$ inner faces and that 
we have already embedded $R(G')$ into $Q_{n-1}$\,. We partition the 
perfect matchings  of $G$  into the sets  ${\cal F}_e(G)$\,, ${\cal 
F}_{\overline e}^{\overline{\ell}}(G)$\,, and ${\cal F}_{\overline e}^{\ell}(G)$\,, 
accordingly to the presence or  absence of edge $e$ and  link $\ell$ of $F$ to $F'$. More precisely,
${\cal F}_e(G)$ is the set of perfect matchings of $G$ that contain edge $e$, ${\cal 
F}_{\overline e}^{\overline{\ell}}(G)$ is the set of perfect matchings of $G$ that do not contain neither $e$ nor link $\ell$, and
${\cal F}_{\overline e}^{\ell}(G)$ is the set of perfect matchings of $G$ that do not contain edge $e$ but contain link $\ell$.
Then   $R(G)[{\cal F}_e(G) \cup {\cal F}_{\overline e}^{\overline{\ell}}(G)]$ of $R(G)$ is isomorphic to $R(G')$
and also $R(G)[{\cal F}_e(G)]$ and $R(G)[{\cal F}_{\overline e}^{\ell}(G)]$ are isomorphic.
 The resonance graph $R(G)$ can be embedded  into $Q_n = Q_{n-1}\Box K_2$ in such a 
way that the perfect matchings  of ${\cal F}_e(G)\cup {\cal F}_{\overline 
e}^{\overline{\ell}}(G)$ lie in one copy of $Q_{h-1}$\,,  and the perfect matchings  of 
${\cal F}_{\overline e}^{\ell}(G)$ in the other copy, where perfect matchings 
of ${\cal F}_e(G)$ and ${\cal F}_{\overline e}^{\ell}(G)$ are pairwise 
joined by  an edge.  For more details we refer to  \cite{kl-br} or  \cite{kl}.

As a consequence of that decomposition structure the following result was also proved.
\begin{theorem}\cite{kl-br}
\label{main}
The resonance graph of a cers is a median graph.
\end{theorem}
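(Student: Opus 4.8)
The plan is to prove the statement by induction on the number $n$ of inner faces of the cers $G$, using Mulder's characterization of median graphs: a connected graph is a median graph if and only if it can be obtained from $K_1$ by a sequence of \emph{convex expansions} (see \cite{imkl-00}). The whole point of the inductive step is to recognize the decomposition of $R(G)$ recalled just above as a single convex expansion of $R(G')$. For the base case $n=1$ the cers $G$ is a single even cycle $C_{2k}$, which has exactly two perfect matchings whose symmetric difference is the entire face; hence $R(G)\cong K_2$, which is trivially a median graph.

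For the inductive step I would take $G$ to be obtained from a cers $G'$ with $n-1$ inner faces by attaching the inner face $F$ over a boundary edge $e$ of $G'$, where $e$ also lies on the inner face $F'$ of $G'$. By the induction hypothesis $R(G')$ is a median graph. The decomposition recalled above shows that $R(G)$ is exactly the expansion of $R(G')$ in which the vertex set $\mathcal{F}_e$, viewed inside $R(G')\cong R(G)[\mathcal{F}_e\cup\mathcal{F}_{\overline e}^{\overline\ell}]$, is duplicated: one copy remains as $\mathcal{F}_e$, the other copy is $\mathcal{F}_{\overline e}^{\ell}$, the two induced subgraphs are isomorphic, and corresponding vertices are joined by an edge. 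Thus it only remains to verify that this is a \emph{convex} expansion, i.e. that $R(G')[\mathcal{F}_e]$ is a convex subgraph of $R(G')$; Mulder's theorem then yields that $R(G)$ is a median graph.

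To establish convexity I would show that $\mathcal{F}_e$ is a halfspace of the partial cube $R(G')$. Since $e$ is a boundary edge of $G'$, it lies on the unique inner face $F'$, so among all face rotations only the rotation of $F'$ changes the membership of $e$ in a perfect matching, while every other rotation preserves it. Hence traversing any edge of $R(G')$ toggles the presence of $e$ exactly when that edge is an $F'$-rotation, and simultaneously an $F'$-rotation is precisely an edge that flips the hypercube coordinate assigned to $F'$ in the isometric embedding recalled above. Consequently the presence of $e$ and the $F'$-coordinate change in lockstep across every edge, so on the connected graph $R(G')$ all $F'$-rotations form a single Djokovi\'c--Winkler $\Theta$-class, and removing this class splits $R(G')$ into exactly the two convex halfspaces $\{M : e\in M\}=\mathcal{F}_e$ and $\{M : e\notin M\}$. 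As a halfspace of a partial cube is convex, $R(G')[\mathcal{F}_e]$ is convex, which closes the induction.

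The hard part is exactly this convexity claim: one must be certain that membership of $e$ is governed by a single coordinate of the embedding, equivalently that the rotations of $F'$ constitute one $\Theta$-class cutting $R(G')$ into the two sets above. Once that is secured, connectedness of $R(G')[\mathcal{F}_e]$ is automatic (a convex subgraph of a connected median graph is connected), and nonemptiness follows from the existence of a perfect matching of $G'$ through $e$ guaranteed by the proposition quoted above, so the convex expansion is legitimate and the argument is complete.
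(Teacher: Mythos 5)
Your overall architecture is exactly that of the proof in \cite{kl-br} which this paper cites rather than reproves: induction on the number of inner faces, recognition of the recalled decomposition of $R(G)$ as a peripheral expansion of $R(G')$ that duplicates ${\cal F}_e$, and Mulder's convex-expansion characterization of median graphs, so that everything reduces to the convexity of $R(G')[{\cal F}_e]$. Your base case, your identification of the expansion (including that the only nontrivial condition is convexity of ${\cal F}_e$, the set $V_1=V(R(G'))$ being trivially convex), and your observation that only rotations of $F'$ can change membership of $e$ (since $e$ lies on no other inner face of $G'$) are all correct.

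The one step you lean on that is \emph{not} actually available is the sentence claiming that ``an $F'$-rotation is precisely an edge that flips the hypercube coordinate assigned to $F'$ in the isometric embedding recalled above.'' The recalled embedding gives this correspondence only for the most recently attached face: when $Q_n=Q_{n-1}\Box K_2$ is formed, the edges flipping the new coordinate are the cross edges between ${\cal F}_e$ and ${\cal F}_{\overline e}^{\ell}$. Your $F'$ received its coordinate at an earlier stage of the recursion, so you must argue that the correspondence survives all subsequent attachments. It does, but this requires its own short induction riding along the same decomposition: (i) the cross edges are exactly the rotations of the newly attached face $F$, because an edge of $R(G)$ joining a matching containing $e$ to one containing the link $\ell$ has symmetric difference equal to a face containing both $e$ and $\ell$, and $F$ is the only such face; (ii) the isomorphisms $R(G)[{\cal F}_e\cup{\cal F}_{\overline e}^{\overline \ell}]\cong R(G')$ (via $M\mapsto M\cap E(G')$) and ${\cal F}_{\overline e}^{\ell}\cong{\cal F}_e$ (via $M\mapsto M\oplus E(F)$) preserve which inner face is rotated along an edge, so the correspondence for the old coordinates persists. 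The clean repair is to strengthen your induction hypothesis to ``$R(G')$ is median \emph{and} in its recursive embedding the edges flipping coordinate $j$ are exactly the $F_j$-rotations''; with that invariant your halfspace/$\Theta$-class argument closes correctly. A second, smaller slip: nonemptiness of ${\cal F}_e(G')$ is not what the quoted proposition says (it only guarantees \emph{some} perfect matching of $G'$); a perfect matching of $G'$ through $e$ must be constructed, e.g.\ by an easy induction on the faces of $G'$, extending a matching of the face $F'$ that uses $e$.
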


In \cite{kl} the algorithm of binary coding procedure for benzenoid graphs was presented. In this section the algorithm is generalized to an arbitrary cers. Firstly, we need to generalize the concept of linear, kinky, and branched hexagons in a catacondensed benzenoid graph (see \cite{gucy-89} for more details).

\begin{definition}
Let $F$, $F'$, and $F''$ be three inner faces of a cers such that $F$ and $F'$ have the common edge $e$ and $F',F''$ have the common edge $f$. The triple $(F,F',F'')$ is  \textbf{regular} if the distance $d_G(e,f)$ is an even number and \textbf{irregular} otherwise. See Figure \ref{regular_iregular}.
\end{definition}

\begin{figure}[h!] 
\begin{center}
\includegraphics[scale=0.7, trim=0cm 0.5cm 0cm 0cm]{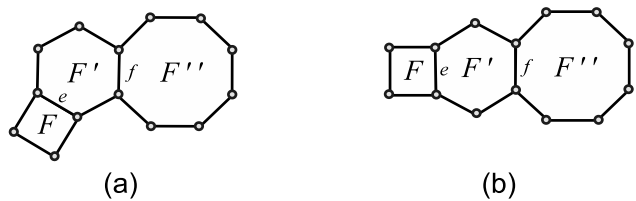}
\end{center}
\caption{\label{regular_iregular} (a) Regular and (b) irregular triple $(F,F',F'')$ of faces.}
\end{figure}

Let $G$ be a cers with $n$ inner faces. Starting from an arbitrarily chosen terminal inner face $F_1$ we can assign consecutive numbers
to each inner face to get the ordering $F_1,F_2, \ldots , F_n$. 
Let $T$ be an inner dual of cers $G$ which is a tree with $n$ vertices. The pendant vertex of $T$, which corresponds to $F_1$, is
chosen as the root of this tree. The inner faces of $G$ are
then numbered such that $F_i$ is a predecessor of $F_j$ in $T$
if and only if $i<j$. Such a numbering of inner faces is called \textit{well-ordered} and can be obtained, for example,
by the Depth-First Search algorithm (DFS) or by the
Breadth-First Search algorithm (BFS).

Suppose that we have already obtained the binary coding for the perfect matchings of a graph composed of faces $F_1,\ldots,F_{k-1}$ and that we have obtained the set of binary strings $S_{k-1}$, $k \geq 3$. 
Let $F_j$ be the face from the set $ \left\{ F_1,\ldots,F_{k-1} \right\}$ that is adjacent to $F_k$. Moreover, define the face $F_i$ as the face with the smallest index among all the adjacent inner faces of $F_j$. Then one can obtain $S_k$ by choosing one of the next options:
\begin{itemize}
\item [(a)] if $(F_{i}, F_{j}, F_{k})$ is regular, we obtain $S_k$ by inserting strings $x0$ (first two examples in Figure \ref{regular}(a)) for each $x =x_1\ldots x_{k-1} \in S_{k-1}$ and inserting $x1$ (the last example in  Figure \ref{regular}(a)) for each $x$ with $x_j=0$.

\item [(b)] if $(F_{i}, F_{j}, F_{k})$ is irregular, we obtain $S_k$ by inserting strings $x0$ (first two examples in Figure \ref{regular}(b)) for each $x = x_1\ldots x_{k-1} \in S_{k-1}$ and inserting $x1$ (the last example in  Figure \ref{regular}(b)) for each $x$ with $x_j=1$. 

%

\end{itemize}

\begin{figure}[h!] 
\begin{center}
\includegraphics[scale=0.45]{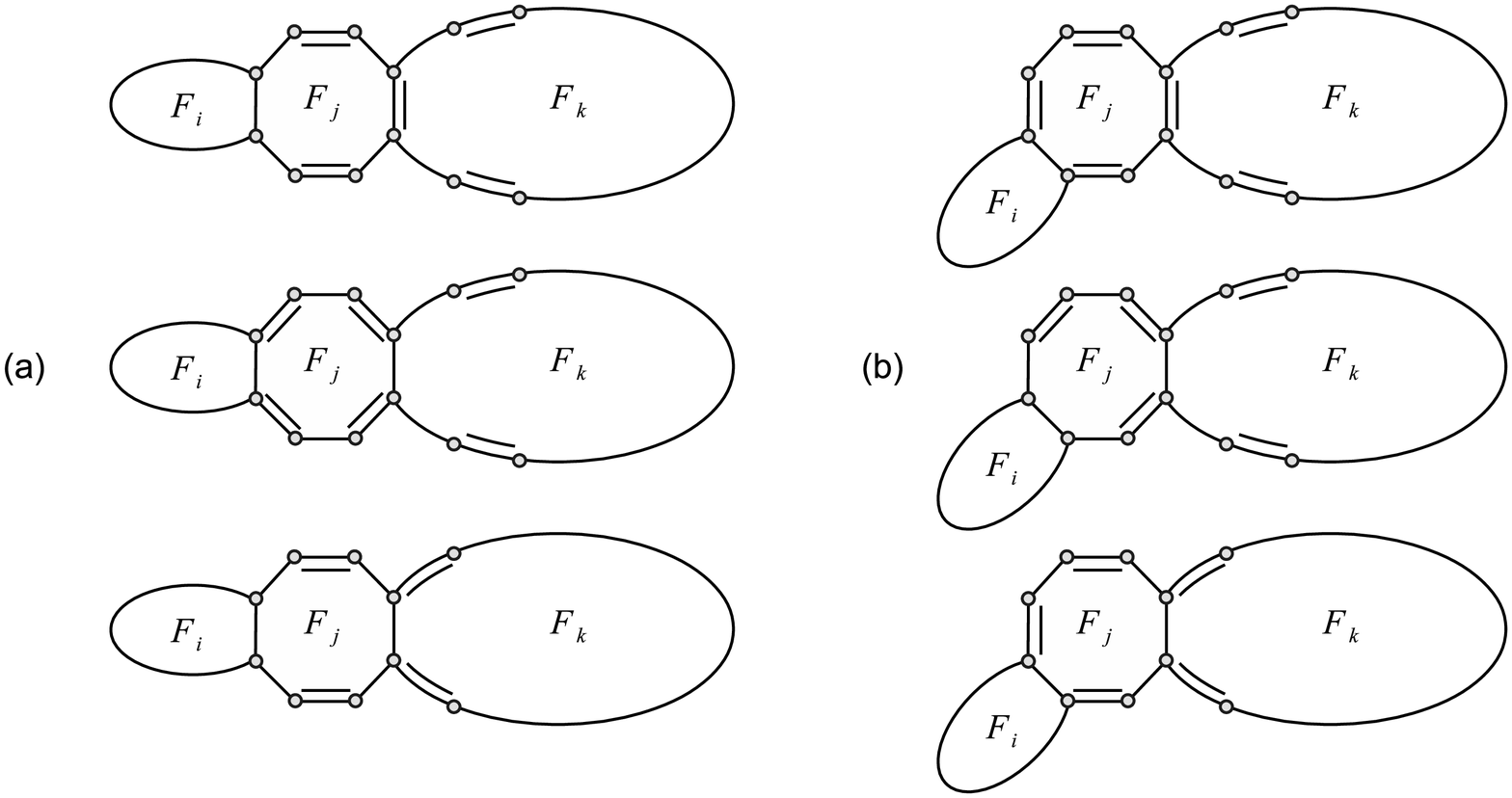}
\end{center}
\caption{\label{regular}  (a) Binary coding procedure in the regular and (b) in the irregular case.}
\end{figure}

From the above mentioned results we present the generalized version of the algorithm from \cite{kl}. For a greater transparency we mark $S_{k-1}:=S$ and $S_k:=S'$.
For a cers with two faces, $F_1$ and $F_2$, we define the binary codes $00, 01, 10$ for perfect matchings in the following way: code $00$ represents the perfect matching that contains the common edge of $F_1$ and $F_2$. Moreover, $01$ represents the perfect matching obtained from $00$ by rotating the edges in $F_2$, and $10$ is the remaining perfect matching. See Figure \ref{tri_kode} for an example.

\begin{figure}[h!] 
\begin{center}
\includegraphics[scale=0.6]{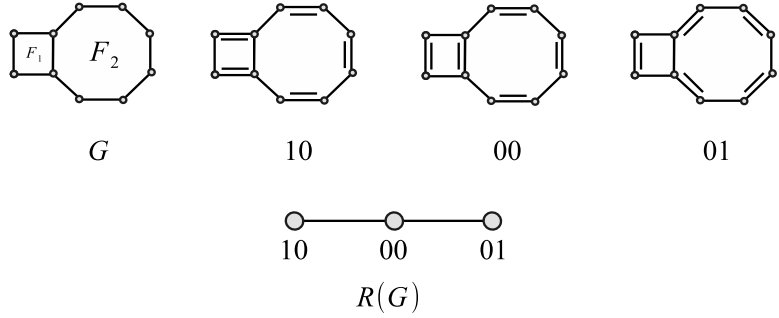}
\end{center}
\caption{\label{tri_kode} The graph $G$, the perfect matchings of $G$ and the resonance graph $R(G)$.}
\end{figure}

\begin{algorithm}[H]

    \KwIn{Graph $G$ well-ordered numbering of inner faces $F_1,F_2, \ldots , F_n$}
    \KwOut{Binary codes for all perfect matchings of a graph $G$}
   		$S:=\{00, 01, 10\}$
   	 \For{k = 3, \ldots, n}{
        	$S':=\emptyset$ \\
			set $j \in \left\{ 1, \ldots, k-1\right\} $ such that $F_jF_k \in E(T)$ \\
			$i=\min\{l;F_l F_j \in E(T) \}$ \\
  \uIf{$(F_{i}, F_{j}, F_{k})$ is regular}
  {\For {each $x \in S$}{$S':=S' \cup \{x0\}$ 
  \If{$x_j=0$}{$S':=S' \cup \{x1\}$ }
  }
  }
  \Else{\For {each $x \in S$}{$S':=S' \cup \{x0\}$ 
  \If{$x_j=1$}{$S':=S' \cup \{x1\}$}
  }
  }
    $S:=S'$
  }
  \caption{Binary coding of perfect matchings of cers}
\end{algorithm}
\bigskip

It follows that this coding procedure results in an isometric embedding of the resonance graph of a cers with $n$ inner faces into a hypercube of dimension $n$. Therefore, two perfect matchings are adjacent in the resonance graph if and only if their codes differ in precisely one position and hence, we can easily construct the resonance graph from the set of binary codes of a cers. Moreover, two cers with the same sets of codes have isomorphic resonance graphs.

For an example, we use Algorithm 1 on the cers $G$ from Figure \ref{uporaba_algoritma} and we denote its faces by $F_1,\ldots,F_5$. The resonance graphs obtained by Algorithm 1 are depicted in Figure \ref{algoritem_resonancni}. By $G_k$ we denote the subgraph of $G$ induced by the faces $F_1, \ldots, F_k$, where $k \in \lbrace 2, 3,4,5\rbrace$. Obviously, $G_5=G$ and therefore, the last graph in Figure \ref{algoritem_resonancni} is the resonance graph of $G$.
\begin{figure}[h!] 
\begin{center}
\includegraphics[scale=0.75, trim=0cm 1cm 0cm 1cm]{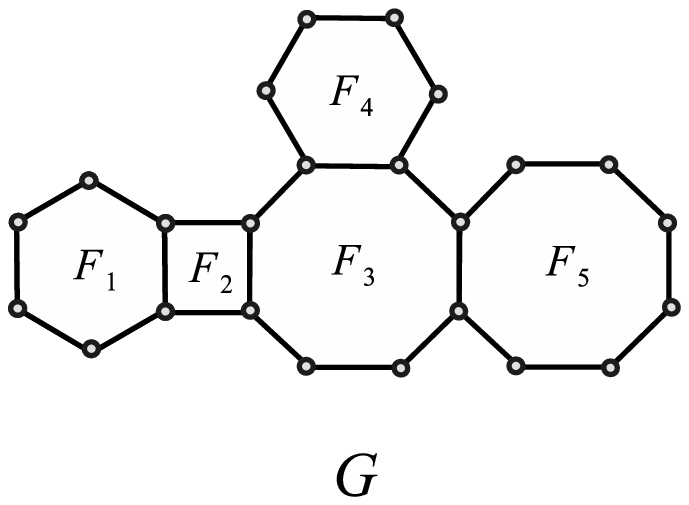}
\end{center}
\caption{\label{uporaba_algoritma} Cers $G$.}
\end{figure}

\begin{figure}[h!] 
\begin{center}
\includegraphics[scale=0.75, trim=0cm 0.5cm 0cm 0.5cm]{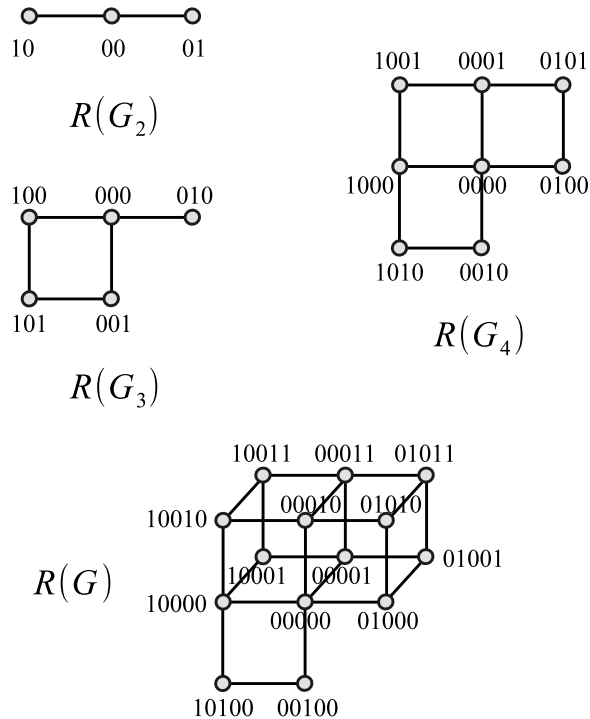}
\end{center}
\caption{\label{algoritem_resonancni} Resonance graphs $R(G_2)$, $R(G_3)$, $R(G_4)$, and $R(G)$ of graph $G$ from Figure \ref{uporaba_algoritma}.}
\end{figure}

\section{Resonantly equivalent cers}

In this section we investigate cers with isomorphic resonance graphs. To describe such graphs, we define the following transformation. If $G$ is a cers with an inner face $F$ and the outer face $F_0$, then a connected component of the graph induced by the edges in $E(F) \cap E(F_0)$ is called a {\em boundary segment}. 
\smallskip

\noindent
\textbf{Transformation 1.} Let $G$ be a cers and $P$ a boundary segment of $G$. A cers $G'$ is obtained from $G$ by subdividing edges of $P$ an even number of times or reversely, smoothing an even number of vertices of $P$ (see Figure \ref{Transformation_1}).

\begin{figure}[h!] 
\begin{center}
\includegraphics[scale=0.6]{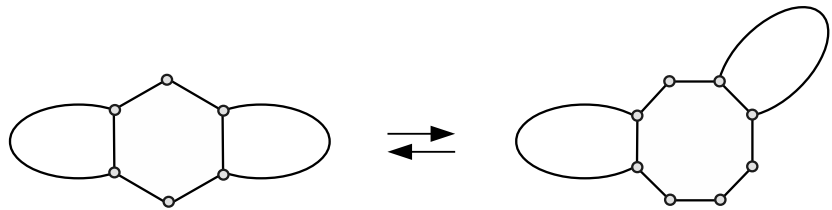}
\end{center}
\caption{\label{Transformation_1} Subdividing (from left to right) and smoothing (from right to left) an even number of times.}
\end{figure}

Using the above transformation we define a new term which is essential for the rest of the paper.

\begin{definition}
Let $G$ and $H$ be two cers. Then $G$ is \textbf{resonantly equivalent} to $H$ if $G$ can be obtained from $H$ by successively applying Transformation 1. In such a case we write $G \overset{R}{\sim} H$.
\end{definition}

\noindent
Resonantly equivalent cers are shown in Figure \ref{structurally_equivalent}
\begin{figure}[h!] 
\begin{center}
\includegraphics[scale=0.6]{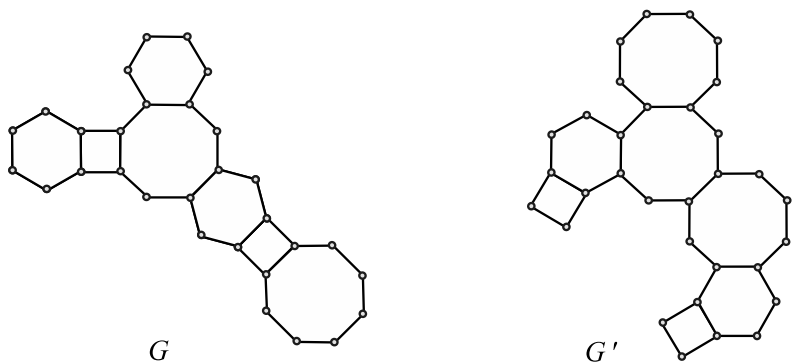}
\end{center}
\caption{\label{structurally_equivalent} Resonantly equivalent cers $G$ and $G'$.}
\end{figure}

\begin{remark}
Relation $\overset{R}{\sim}$ is an equivalence relation on the set of all cers. If $G$ and $H$ are resonantly equivalent, then both graphs have the same number of faces which differ only in lengths of its boundary segments.
\end{remark}

The following lemma is needed for the main result of this section.
\begin{lemma} \label{triple_reg}
Let $G$ and $H$ be resonantly equivalent cers and let $F_1$, $F_2$, $F_3$ be three inner faces of $G$ such that $F_1,F_2$ have a common edge and $F_2,F_3$ have a common edge. If we denote by $F_1', F_2', F_3'$ the corresponding faces of $H$, respectively, then the triple $(F_1,F_2,F_3)$ is regular if and only if the triple $(F_1',F_2',F_3')$ is regular.
\end{lemma}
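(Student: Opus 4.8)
The plan is to reduce the whole question to the parity of a single arc length along the middle face $F_2$, and then to check that this parity is untouched by one step of Transformation~1; transitivity of $\overset{R}{\sim}$ will then finish the argument. First I would record a structural fact about distances in a cers: if $e$ and $f$ are two edges lying on a common inner face $F_2$, then every shortest path joining an endpoint of $e$ to an endpoint of $f$ may be taken to run along the boundary cycle of $F_2$. Indeed, since the inner dual is a tree and adjacent faces share a single edge, deleting $F_2$ splits the remaining faces into subtrees, each attached to $F_2$ through exactly one edge; hence any excursion of a path off the cycle of $F_2$ must re-enter $F_2$ through the same edge it left by, and can therefore be shortcut without increasing length. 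Consequently $d_G(e,f)=\min(p,q)+1$, where $p$ and $q$ denote the numbers of edges of $F_2$ strictly between $e$ and $f$ on its two arcs (here I use $d_G(e,f)=\widehat{d}_G(e,f)+1$ from the preliminaries, together with the fact that the two ``crossing'' endpoint pairs are at distance $\min(p,q)+1$, so the minimum $\widehat{d}_G(e,f)=\min(p,q)$ is attained on a ``near'' pair).

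Next I would exploit bipartiteness. Because $G$ is bipartite the boundary of $F_2$ is an even cycle, so $p+q=|E(F_2)|-2$ is even and $p\equiv q \pmod 2$. Therefore $\min(p,q)\equiv p \pmod 2$ and $d_G(e,f)\equiv p+1 \pmod 2$; in particular $(F_1,F_2,F_3)$ is regular precisely when $p$ is odd. This reduces the lemma to the claim that the arc parity $p$ is invariant under Transformation~1.

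For the invariance step I would argue as follows. A single application of Transformation~1 subdivides (or smooths) one boundary segment an even number of times, so the length of that segment changes by an even number, while the edges $e$ and $f$ --- being shared by two inner faces, hence never boundary segments --- are left intact; likewise the transformation changes neither the inner dual nor the face correspondence $F_i\leftrightarrow F_i'$. If the affected boundary segment does not lie on $F_2$, then the boundary of $F_2$ is unchanged and $p'=p$. If it does lie on $F_2$, it sits entirely inside one of the two arcs between $e$ and $f$, so exactly one of $p,q$ changes by an even number while the other is unchanged; in either case $p'\equiv p \pmod 2$. Hence regularity is preserved by one step. Finally, writing $H=G_0,G_1,\ldots,G_m=G$ for a sequence realizing $G\overset{R}{\sim} H$ and applying the previous sentence to each consecutive pair, I conclude that $(F_1,F_2,F_3)$ is regular if and only if $(F_1',F_2',F_3')$ is.

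The step I expect to be the main obstacle is the structural distance fact of the first paragraph --- namely, justifying rigorously that shortest paths between endpoints of $e$ and $f$ stay on $F_2$, so that $d_G(e,f)$ equals the arc distance. Without it the parity of $d_G(e,f)$ is not obviously controlled by $p$ alone, since a priori the minimizing pair of endpoints could be one of the two crossing pairs, whose distances have the opposite parity, and the whole reduction would collapse. Everything after that point is elementary parity bookkeeping.
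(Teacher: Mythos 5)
Your proof is correct, and its overall reduction is the same one the paper makes: fix shared edges $e\in E(F_1)\cap E(F_2)$ and $f\in E(F_2)\cap E(F_3)$, note that regularity is the statement that $d_G(e,f)$ is even, and show this parity is invariant under Transformation~1. The difference is in what actually gets proved. The paper's entire proof is the sentence that ``obviously, by the definition of Transformation 1, $d_G(e,f)$ is even if and only if $d_H(e',f')$ is even,'' whereas you establish that claim: you show $d_G(e,f)=\min(p,q)+1$ via the shortcut argument along $\partial F_2$ (using that the inner dual is a tree, so each branch attaches to $F_2$ through a single shared edge), you use bipartiteness to get $p\equiv q\pmod 2$ so that the parity is read off one arc, and you observe that a boundary segment is connected and contains neither $e$ nor $f$ (shared edges never lie on the outer face), hence lies inside a single arc and changes its length by an even amount. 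This extra work is genuine content rather than pedantry: Transformation~1 does \emph{not} preserve distance parities in general. For example, in a single square (a cers with one inner face) subdividing two distinct boundary edges once each is a legal application of Transformation~1, yet it moves two adjacent vertices to distance $2$. So the ``obvious'' invariance really does depend on the specific configuration you exploit --- $e$ and $f$ are shared edges whose distance is realized along $\partial F_2$, and each boundary segment of $F_2$ sits in one arc between them. The step you flagged as the main obstacle (shortest paths between $e$ and $f$ staying on $\partial F_2$) is indeed the only nontrivial ingredient, and your tree-of-branches argument is the right way to obtain it; in this sense your write-up supplies precisely the justification the paper compresses into one word.
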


\begin{proof}
Let $e \in E(F_1) \cap E(F_2)$, $f \in E(F_2) \cap E(F_3)$, $e' \in E(F_1') \cap E(F_2')$ and let $f' \in E(F_2') \cap E(F_3')$. Obviously, by the definition of Transformation 1 it holds that $d_G(e,f)$ is an even number if and only if $d_H(e',f')$ is an even number. Therefore, the triple $(F_1,F_2,F_3)$ is regular if and only if $(F_1',F_2',F_3')$ is regular. \qed
\end{proof}

\begin{theorem} \label{glavni}
Let $G$ and $H$ be two cers. If $G$ and $H$ are resonantly equivalent, then the resonance graph $R(G)$ is isomorphic to the resonance graph $R(H)$.
\end{theorem}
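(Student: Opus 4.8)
The plan is to reduce the statement to a single application of Transformation~1 and then to show that Algorithm~1 produces literally the same set of binary codes for $G$ and for $H$; the desired isomorphism then follows from the already-recorded fact that two cers with identical code sets have isomorphic resonance graphs. Since $\overset{R}{\sim}$ is an equivalence relation and graph isomorphism is transitive, it suffices to treat the case in which $G$ is obtained from $H$ by one application of Transformation~1. First I would observe that Transformation~1 modifies only the exterior boundary of a single inner face $F$ (a boundary segment lies on $E(F)\cap E(F_0)$ and is shared with no other inner face), so it neither creates nor destroys inner faces and it preserves every adjacency between inner faces. Consequently there is a natural bijection between the inner faces of $G$ and those of $H$ that induces an isomorphism of their inner dual trees $T_G$ and $T_H$, and this bijection carries terminal faces to terminal faces.

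Using this bijection I would fix a well-ordered numbering $F_1,\dots,F_n$ of the faces of $G$ rooted at some terminal face, and transport it to the matching numbering $F_1',\dots,F_n'$ of $H$. The key point is that the behaviour of Algorithm~1 at step $k$ depends only on three pieces of combinatorial data: the index $j$ of the parent face $F_j$ of $F_k$ in the tree, the index $i=\min\{\,l : F_lF_j\in E(T)\,\}$, and whether the triple $(F_i,F_j,F_k)$ is regular or irregular. The first two quantities are determined by the tree structure alone and hence coincide for $G$ and $H$ under our numbering, while Lemma~\ref{triple_reg} guarantees that $(F_i,F_j,F_k)$ is regular if and only if $(F_i',F_j',F_k')$ is regular. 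Since the algorithm starts from the same initial set $S=\{00,01,10\}$ and applies an update rule ($x\mapsto x0$ together with $x\mapsto x1$ conditioned on $x_j=0$ or $x_j=1$) that is completely specified by these data, an induction on $k$ shows that the intermediate sets $S_k$ --- and therefore the final code sets $S_n$ --- agree for $G$ and $H$.

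Having established that $G$ and $H$ yield the same set of binary codes, I would invoke the consequence of the coding procedure stated after Algorithm~1: two cers with equal code sets have isomorphic resonance graphs. This completes the argument. The step that requires the most care is the reduction to a single transformation together with the verification that the face bijection really does induce a tree isomorphism preserving the relevant indices --- in particular one must check that smoothing or subdividing a boundary segment leaves the shared internal edges untouched, so that the choices of $j$ and $i$ are genuinely transported between $G$ and $H$. Once this bookkeeping is in place, the parity invariance supplied by Lemma~\ref{triple_reg} does the real work, and the remainder is a routine induction.
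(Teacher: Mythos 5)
Your proposal is correct and follows essentially the same route as the paper: both arguments transport a well-ordered numbering of inner faces from $G$ to $H$ via the natural face bijection, run the induction through Algorithm~1 using Lemma~\ref{triple_reg} to match the regular/irregular cases at each step, and conclude that the identical code sets force $R(G) \cong R(H)$. Your additional bookkeeping (reducing to a single application of Transformation~1 and verifying the inner-dual tree isomorphism explicitly) only makes explicit what the paper's proof leaves implicit in the phrase ``corresponding faces.''
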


\begin{proof}
Let $F_1, \ldots, F_n$ be a well-ordered numbering of inner faces of $G$. Moreover, let $F_1', \ldots, F_n'$ be the corresponding faces of graph $H$. We prove the theorem by using induction on the number of faces. Let $G_k,H_k$, $ k \in \lbrace1, \ldots,n \rbrace$, be the graphs induced by the faces $F_1,\ldots,F_k$ and $F_1',\ldots,F_k'$, respectively. Obviously, the resonance graphs $R(G_2)$, $R(H_2)$ are both isomorphic to the path $P_2$ and therefore isomorphic to each other.
Suppose that we have already proved that for some $k \geq 3$ the sets of codes $S_{k-1}$ and $S_{k-1}'$ are the same (and therefore, $R(G_{k-1})$ is isomorphic to $R(H_{k-1})$). Let $F_j$ be the face of $G_k$ from the set $ \left\{ F_1,\ldots,F_{k-1} \right\}$ that is adjacent to $F_k$. Moreover, define the face $F_i$ as the face with the smallest index among all the adjacent inner faces of $F_j$. We define $F_j'$ and $F_i'$ from $H_k$ in the same way. By Lemma \ref{triple_reg} it follows that $(F_i,F_j,F_k)$ is regular if and only if $(F_i',F_j',F_k')$ is regular. Therefore, by using Algorithm 1 we obtain that $S_k = S_k'$. Hence, $R(G_k)$ is isomorphic to $R(H_k)$.

\noindent
By induction we obtain $R(G) = R(G_n) \cong R(H_n) = R(H)$ and the proof is complete. \qed 
\end{proof}

\section{Catacondensed benzenoid graphs and cers with isomorphic resonance graphs}

\begin{definition}
A cers $G$ is called \textbf{normal} if the following conditions hold:
\begin{itemize}
 \item [$(i)$] any inner face is adjacent to at most three other inner faces,
 \item [$(ii)$] if an inner face $F$ is adjacent to three other distinct inner faces $F_1,F_2,F_3$, then all the triples $(F_1,F,F_2), (F_1,F,F_3)$, and $(F_2,F,F_3)$ are regular.
 \end{itemize}
\end{definition}

\begin{theorem} \label{normal_equ}
If $G$ is a cers, then $G$ is normal if and only if $G$ is resonantly equivalent to a catacondensed benzenoid graph. 
\end{theorem}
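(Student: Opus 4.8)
The plan is to prove the two implications separately, the reverse one being short and the forward one carrying the real content. The computational engine behind both directions is a parity dictionary between regularity of triples and the lengths of the boundary segments of a face, which I would establish first. Fix an inner face $F$ of degree $d$ in the inner dual; since adjacent faces of a cers share exactly one edge and no vertex lies on three inner faces, the $d$ common edges split the (even) boundary cycle of $F$ into $d$ boundary segments, each of length at least $1$. A direct count in the line graph shows that if $e,f$ are common edges of $F$ separated by a boundary segment of length $a$ containing no other common edge, then $d_G(e,f)\equiv a+1\pmod 2$; hence the corresponding triple through $F$ is regular precisely when that segment has odd length. In particular a degree-$2$ face has its triple regular iff both of its segments are odd, and a degree-$3$ face has all three triples regular iff all three of its segments are odd.

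For the implication that resonant equivalence to a catacondensed benzenoid graph forces normality, I would first check that a catacondensed benzenoid graph $B$ is itself normal. In $B$ the common edges of a hexagon form an independent set in a $6$-cycle, so at most three neighbours occur, giving condition $(i)$; and when three neighbours occur the common edges are the alternating edges of the hexagon, pairwise at line-graph distance $2$, so every triple is regular, giving $(ii)$. Since Transformation 1 neither creates nor destroys faces or adjacencies, it preserves the inner dual and hence condition $(i)$, and by Lemma \ref{triple_reg} it preserves regularity of every triple and hence condition $(ii)$. As normality is thus invariant under $\overset{R}{\sim}$ and $B$ is normal, every cers resonantly equivalent to $B$ is normal.

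For the converse I would explicitly turn a normal cers $G$ into a resonantly equivalent benzenoid by reshaping each face, one boundary segment at a time, into a hexagon. Because distinct inner faces own disjoint boundary segments, these reshapings are independent and each is a legal instance of Transformation 1, so the resulting graph $B$ satisfies $G\overset{R}{\sim}B$. Using the parity dictionary and normality I would treat the faces by degree: a terminal face has a single segment of odd length, which I subdivide or smooth to length $5$, producing a terminal hexagon; a degree-$2$ face has two segments of equal parity, which I bring to $\{2,2\}$ in the irregular case (a linear hexagon) and to $\{1,3\}$ in the regular case (a kinky hexagon); and a degree-$3$ face, where normality forces all three segments odd, I bring to $\{1,1,1\}$, producing a branched hexagon. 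Condition $(i)$ guarantees no face of higher degree occurs. Since Transformation 1 changes segment lengths only by even amounts, each target (of the same parity as the original segment and of admissible size $\ge 1$) is reachable, and the inner dual, still a tree, makes $B$ a catacondensed graph with every face a hexagon, i.e.\ a catacondensed benzenoid graph.

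The main obstacle is the forward direction, and within it the genuinely load-bearing point is condition $(ii)$: without the hypothesis that all triples at a degree-$3$ face are regular, two of its segments could be even, and no sequence of even subdivisions or smoothings could turn that face into a hexagon whose three common edges sit on alternating edges. The parity dictionary isolates exactly this obstruction and shows that normality removes it; the remaining verifications (that each reshaping stays bipartite, planar and $2$-connected, and that adjacency is untouched) are routine properties of Transformation 1 already invoked in Lemma \ref{triple_reg}.
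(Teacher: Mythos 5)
Your proof is correct and takes essentially the same route as the paper: reshaping each inner face into a hexagon via Transformation 1 for the forward implication, and for the converse using the facts that Transformation 1 preserves face adjacencies and (via Lemma \ref{triple_reg}) regularity of triples, together with the regularity of all triples at a branched hexagon. Your parity dictionary (a triple is regular iff the separating boundary segment has odd length) is a welcome addition that rigorously justifies the step the paper dispatches with the parenthetical ``note that this can be done since $G$ is a normal cers,'' but it does not change the underlying argument.
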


\begin{proof} Let $G$ be a normal cers. To obtain a catacondensed benzenoid graph $G'$, we successively replace all the inner faces of $G$ by hexagons in the following way: when considering a particular inner face $F$ we perform Tranformation 1 on boundary segments of $F$ such that $F$ becomes a hexagon (note that this can be done since $G$ is a normal cers). Obviously, $G$ and $G'$ are resonantly equivalent.

For the other direction, suppose that $G$ is resonantly equivalent to a catacondensed benzenoid graph $G'$. Therefore, $G$ can be obtained from $G'$ by successively applying Transformation 1. Since this transformation preserves the number of adjacent inner faces of some inner face, it follows that any inner face of $G$ is adjacent to at most three inner faces.
Moreover, let $F$ be an inner face of $G$ that is adjacent to three other inner faces $F_1,F_2,F_3$. Let $H,H_1,H_2,H_3$ be the hexagons in $G'$ that correspond to $F,F_1,F_2,F_3$, respectively. Since $H$ is branched, the triples $(H_1,H,H_2)$, $(H_1,H,H_3)$, $(H_2,H,H_3)$ are all regular. Therefore, by Lemma \ref{triple_reg} all the triples $(F_1,F,F_2)$, $(F_1,F,F_3)$, $(F_2,F,F_3)$ are regular. Hence, we have proved that $G$ is normal. \qed
\end{proof}

\begin{theorem}
If $G$ is a normal cers, then the resonance graph $R(G)$ is isomorphic to the resonance graph of some catacondensed benzenoid graph.
\end{theorem}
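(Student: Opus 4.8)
Looking at this, the final theorem states: if $G$ is a normal cers, then $R(G)$ is isomorphic to the resonance graph of some catacondensed benzenoid graph.

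This is almost immediate from the two preceding results. Let me sketch the proof strategy.

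The key observation is that this final theorem is a direct corollary of Theorem 4.7 (the "normal iff resonantly equivalent to catacondensed benzenoid" characterization) combined with Theorem 4.4 (the "resonantly equivalent implies isomorphic resonance graphs" result).

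---

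The plan is to combine the two immediately preceding theorems, since the statement follows almost mechanically from them.

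First I would invoke Theorem \ref{normal_equ}: since $G$ is a normal cers, it is resonantly equivalent to some catacondensed benzenoid graph $G'$. This produces the candidate benzenoid graph $G'$ whose existence the theorem asserts, so no separate construction is needed.

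Next I would apply Theorem \ref{glavni} to the pair $G$ and $G'$: because $G \overset{R}{\sim} G'$, their resonance graphs satisfy $R(G) \cong R(G')$. Since $G'$ is by construction a catacondensed benzenoid graph, this exhibits $R(G)$ as isomorphic to the resonance graph of a catacondensed benzenoid graph, completing the argument.

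The only thing to be careful about is that the two cited results are genuinely strong enough to close the gap with no residual work. Theorem \ref{normal_equ} supplies the existence of $G'$ in the right class, and Theorem \ref{glavni} supplies the isomorphism of resonance graphs from resonant equivalence; there is no obstacle beyond checking that the hypotheses of each are met (normality for the first, resonant equivalence for the second), which they are by assumption and by the output of the first theorem respectively. Hence I would expect the proof to be a one or two line deduction rather than requiring any new idea, essentially a \emph{corollary} packaged as a theorem.
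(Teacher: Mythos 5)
Your proposal is correct and follows exactly the paper's own argument: the paper likewise takes the catacondensed benzenoid graph $G'$ resonantly equivalent to $G$ furnished by Theorem \ref{normal_equ} and then applies Theorem \ref{glavni} to conclude $R(G) \cong R(G')$. There is no difference in approach.
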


\begin{proof}
Let $G$ be a normal cers and let $G'$ be the catacondensed benzenoid graph obtained in the same was as in the proof of Theorem \ref{normal_equ}. Since $G$ is resonantly equivalent to $G'$, by Theorem \ref{glavni} the resonance graph $R(G)$ is isomorphic to the resonance graph $R(G')$ and the proof is complete. \qed
%

\end{proof}

Obviously, phenylenes are normal cers and therefore, we obtain the following result. 
\begin{corollary}
For any phenylene $B'$ there exists a catacondensed benzenoid graph $B$ such that the resonance graph $R(B')$ is isomorphic to the resonance graph $R(B)$. 
\end{corollary}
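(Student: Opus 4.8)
The final statement is the corollary asserting that for any phenylene $B'$ there exists a catacondensed benzenoid graph $B$ with $R(B') \cong R(B)$. The plan is to deduce this as an immediate consequence of the preceding theorem, whose hypothesis requires only that the cers in question be \emph{normal}. Thus the entire task reduces to verifying that every phenylene is a normal cers; once this structural fact is established, applying the theorem to $G = B'$ yields a catacondensed benzenoid graph $B$ with isomorphic resonance graph, completing the argument.

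First I would recall that a phenylene is obtained from a catacondensed benzenoid graph by inserting a square between every pair of adjacent hexagons, so its inner faces consist of hexagons and squares arranged so that no two hexagons share an edge directly. The key observation is that a phenylene is indeed a cers (a bipartite $2$-connected plane graph with the correct degree conditions and a tree as inner dual), so it makes sense to ask whether it is normal. To check normality I would verify the two defining conditions. For condition $(i)$, I would argue that each inner face of a phenylene is adjacent to at most three other inner faces: the squares are each adjacent to exactly two hexagons, and each hexagon inherits its adjacency count from the underlying catacondensed benzenoid graph, where every hexagon is adjacent to at most three others; inserting squares does not increase these counts.

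The more delicate point, and the step I expect to be the main obstacle, is condition $(ii)$: whenever an inner face $F$ is adjacent to three distinct inner faces $F_1, F_2, F_3$, all three triples $(F_1, F, F_2)$, $(F_1, F, F_3)$, $(F_2, F, F_3)$ must be regular, i.e.\ the relevant inter-edge distances $d_G(e,f)$ must be even. The face $F$ with three neighbors must be a (branched) hexagon, with its three neighbors being squares. Here I would compute, using the explicit local geometry of a hexagon flanked by squares, the distance between the pairs of common edges on the boundary of the hexagon, and confirm that each such distance is even, hence each triple is regular. This is essentially a finite case check on the hexagon--square local configuration, analogous to the computation showing branched hexagons in a benzenoid graph give regular triples (used already in the proof of Theorem \ref{normal_equ}).

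Once both conditions are confirmed, the conclusion follows formally: since $B'$ is a normal cers, the preceding theorem provides a catacondensed benzenoid graph $B$ with $R(B') \cong R(B)$. I would present the verification of normality as the substantive content and then invoke the theorem in a single sentence to finish. The only genuine work is the local geometric parity computation for the branched (three-neighbor) case; everything else is bookkeeping that the inserted squares do not disturb the adjacency structure or the parities inherited from the benzenoid skeleton.
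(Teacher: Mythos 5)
Your proposal is correct and takes essentially the same approach as the paper: the paper obtains the corollary by noting (without detail, as ``obvious'') that every phenylene is a normal cers and then invoking the theorem that a normal cers has a resonance graph isomorphic to that of some catacondensed benzenoid graph. Your added verification of normality --- that squares have two neighbors, hexagons inherit at most three, and the parity check on the pairwise non-adjacent edges of a branched hexagon flanked by squares gives even distances, hence regular triples --- is precisely the detail the paper leaves implicit, and it is sound.
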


In Figure \ref{fenilen_in_benzenoid} two resonantly equivalent graphs are depicted. Therefore their resonance graphs are isomorphic. 

 \begin{figure}[h!] 
\begin{center}
\includegraphics[scale=0.7]{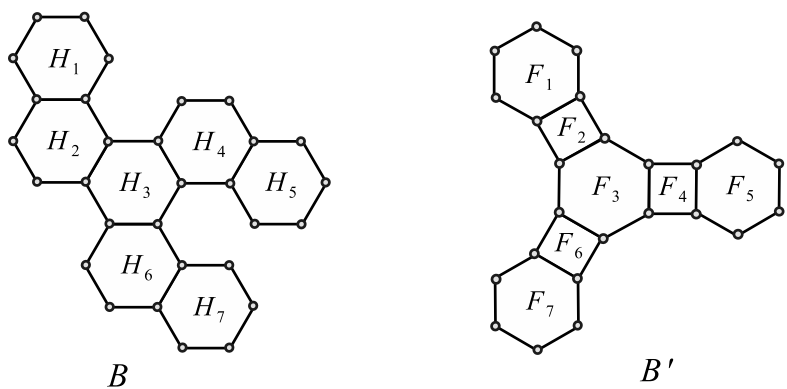}
\end{center}
\caption{\label{fenilen_in_benzenoid} Resonantly equivalent benzenoid graph $B$ and phenylene $B'$.}
\end{figure}

However, there are some cers for which the resonance graphs are not isomorphic to the resonance graph of some catacondensed benzenoid graph. In the following example we show a cers in which any inner face is adjacent to at most three other inner faces, but it has an irregular triple of faces. We show that the resonance graph of such cers is not the resonance graph of a catacondensed benzenoid graph. For this purpose, the following theorem will be used.

\begin{theorem} \label{izr_stari} \cite{zhgu-88}
Let $G$ be a benzenoid graph with a perfect matching that can be embedded into the regular hexagonal lattice. Further, let $V_1$ be the set of vertices of degree one in $R(G)$. If $R(G)$ is not a path, then it is a graph of girth 4 and $R(G) - V$ is 2-connected.
\end{theorem}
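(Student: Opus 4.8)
The plan is to analyse $R(G)$ through two complementary structures: its metric structure as a median graph, supplied for free by Theorem~\ref{main} since a hexagonal-lattice benzenoid is in particular a cers, and its order structure as the covering graph of the distributive lattice $\mathcal{L}$ of Kekul\'e structures of $G$. The first viewpoint yields bipartiteness, while the second drives both the girth and the connectivity conclusions. Concretely, I would first record that by Theorem~\ref{main} the graph $R(G)$ is a median graph, hence a partial cube, hence bipartite; in particular every cycle has even length and the girth is at least $4$. Next I would install the order: the perfect matchings of the plane bipartite graph $G$ carry a natural partial order, generated by rotating an alternating hexagon in the direction fixed by a height function read off the hexagonal embedding, under which they form a distributive lattice $\mathcal{L}$ whose covering relations are exactly the single-hexagon rotations. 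Thus $R(G)$ is precisely the cover graph of $\mathcal{L}$, and $R(G)$ is a path if and only if $\mathcal{L}$ is a chain.

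For the girth I would pass to this order structure. Assume $R(G)$ is not a path, equivalently $\mathcal{L}$ is not a chain. By Birkhoff's representation $\mathcal{L}$ is isomorphic to the lattice of down-sets of its poset $P$ of join-irreducible matchings, and since $\mathcal{L}$ is not a chain, $P$ contains two incomparable elements $s,t$. Taking the down-set $I=({\downarrow}s\cup{\downarrow}t)\setminus\{s,t\}$, the four down-sets $I,\ I\cup\{s\},\ I\cup\{t\},\ I\cup\{s,t\}$ are pairwise related by single-element covers and therefore induce a $4$-cycle in the cover graph $R(G)$. Since the girth is already at least $4$ by bipartiteness, it is exactly $4$. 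In matching terms this square is the commuting pair of rotations of two vertex-disjoint alternating hexagons $F_{1},F_{2}$ of a matching $M$, namely $M,\ M\oplus E(F_{1}),\ M\oplus E(F_{1})\oplus E(F_{2}),\ M\oplus E(F_{2})$.

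Finally, for the $2$-connectedness of $R(G)-V_{1}$ I would first pin down $V_{1}$. A vertex of degree one in a cover graph is an element with a unique comparable neighbour, hence a maximal or a minimal element; since $\mathcal{L}$ has a unique top $\hat{1}$ and bottom $\hat{0}$, we get $V_{1}\subseteq\{\hat{0},\hat{1}\}$, so at most the two extremal matchings are deleted. It then remains to show that the cover graph with $\hat{0}$ and $\hat{1}$ removed has no cut vertex. I would do this by proving that every remaining vertex lies on a $4$-cycle (a non-extremal matching has a hexagon to rotate up and one to rotate down, and from the planar layout these can be chosen disjoint, producing a square), and that any two such squares are joined by a chain of overlapping squares, obtained by rerouting a monotone sequence of rotations through an alternative ordering of disjoint resonant hexagons.

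This last step is the main obstacle, and it is genuinely benzenoid-specific rather than formal: the cover graph of an arbitrary distributive lattice need \emph{not} be $2$-connected after deleting its degree-one vertices (the ordinal sum of two squares has a cut vertex), so the argument must use the planarity of $G$ and the geometry of alternating hexagons, not merely the abstract order. The delicate configuration, already visible in the girth step, is a matching all of whose alternating hexagons pairwise share an edge --- the naphthalene pattern, whose resonance graph is the path $P_{3}$ --- and I expect the crux of the write-up to be showing that once $R(G)$ is not a path such linear obstructions cannot separate $R(G)-V_{1}$. This is exactly where the standing hypothesis that $G$ embeds in the regular hexagonal lattice must be used.
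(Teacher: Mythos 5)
The paper itself contains \emph{no} proof of this statement: it is imported verbatim from \cite{zhgu-88}, accompanied only by the remark that the original argument extends to catacondensed benzenoid graphs. So your attempt has to stand on its own, and only its first half does. Bipartiteness (girth at least $4$) should not be drawn from Theorem~\ref{main}, which covers only cers, i.e.\ catacondensed systems, whereas the statement also allows pericondensed benzenoid graphs; you need the plane bipartite results (e.g.\ \cite{zhang-zhang}) instead, and for benzenoid graphs with fixed bonds one must first split into normal components, since the distributive-lattice machinery requires elementarity. Granting that, your Birkhoff argument producing a $4$-cycle whenever the lattice is not a chain is correct, and so is the observation that a degree-one vertex of the cover graph of a finite lattice must be $\hat{0}$ or $\hat{1}$.

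The genuine gap is the $2$-connectivity half, which is the substantial part of the theorem and which you do not prove. Your plan hinges on the claim that every non-extremal matching has an up-rotatable and a down-rotatable hexagon that are vertex-disjoint, hence lies on a square, justified only by ``from the planar layout these can be chosen disjoint''. That claim is precisely the geometric content of the theorem, and it fails as soon as the hexagonal-lattice hypothesis is dropped. Concretely, take a central hexagon with three hexagons attached along two opposite edges and along a further edge adjacent to one of them (a T-shaped system: all faces are hexagons and the inner dual is a star, so it is a catacondensed benzenoid graph in the abstract sense, but it cannot be realized by hexagons of the regular lattice, where two of the attached hexagons would be forced to share an edge). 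Running Algorithm 1 on it, or listing its six perfect matchings directly, shows its resonance graph is a $4$-cycle with a pendant path of length two. The middle vertex of that pendant path is non-extremal, yet its only two rotatable hexagons share an edge; it lies on no $4$-cycle, and after deleting the unique degree-one vertex it becomes a pendant vertex, so $R(G)-V_1$ acquires a cut vertex. This is exactly the phenomenon exhibited in the paper's closing Example. Consequently no argument at the level of median graphs or distributive lattices can deliver your key claim; the proof must genuinely use the lattice embedding (for instance, that a branched hexagon of a lattice benzenoid meets its neighbours along pairwise non-adjacent edges, together with the peak/valley analysis of \cite{zhgu-88}). You flag this step yourself as ``the main obstacle'', and the second ingredient of your plan --- that any two squares are joined by a chain of overlapping squares --- is likewise only announced, not argued. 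As it stands, the proposal proves the girth statement but not the $2$-connectivity statement.
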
 

Note that the previous theorem was proved in \cite{zhgu-88} only for benzenoid graphs that can be embedded into the regular hexagonal lattice. However, the same proof can be used for all catacondensed benzenoid graphs.

\begin{example}
Let $G$ be a cers depicted in Figure \ref{primer_neekvivalentnega}. The resonance graph $R(G)$ is constructed from the perfect matchings also shown in the same figure. Let $H$ be the subgraph obtained from $R(G)$ by removing the (unique) vertex of degree one. Obviously, $H$ contains a cut vertex and therefore, it is not 2-connected. Hence, by Theorem \ref{izr_stari}, $R(G)$ can not be the resonance graph of some catacondensed benzenoid graph.
\end{example}

 \begin{figure}[h!] 
\begin{center}
\includegraphics[scale=0.6]{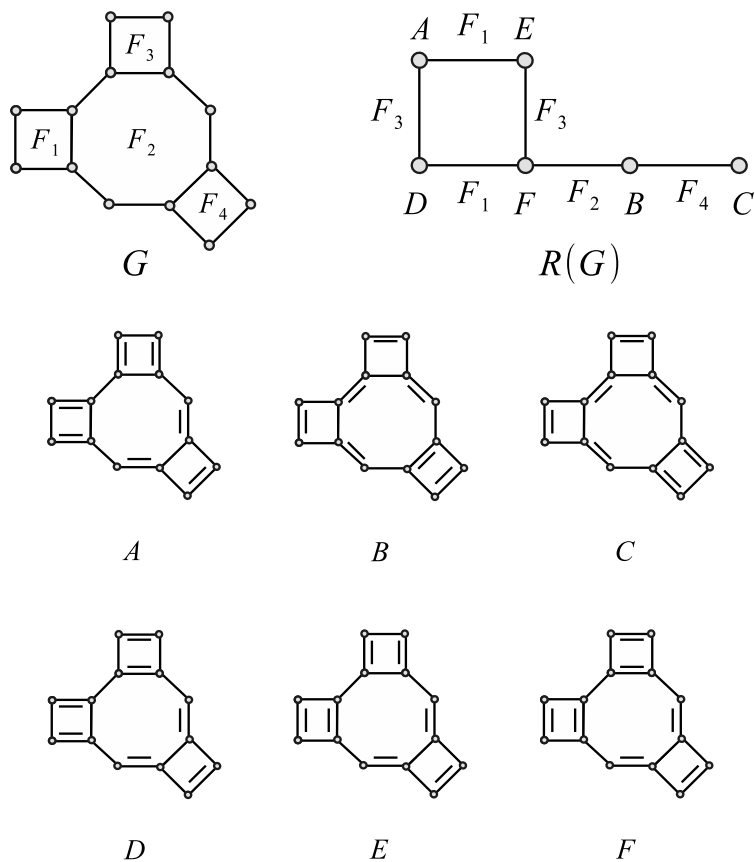}
\end{center}
\caption{\label{primer_neekvivalentnega} Graph $G$ with its perfect matchings and the resonance graph.}
\end{figure}

\section*{Acknowledgment} 

\noindent The authors Petra \v Zigert Pleter\v sek and  Niko Tratnik acknowledge the financial support from the Slovenian Research Agency (research core funding No. P1-0297 and J1-9109).

\end{document}